\def\mathclap#1{\text{\hbox to 0pt{\hss$\mathsurround=0pt#1$\hss}}}
\newtheorem{definition}{Definition}
\newtheorem{remark}{Remark}
\newtheorem{corollary}{Corollary}
\newtheorem{lemma}{Lemma}
\newtheorem{theorem}{Theorem}
\newcommand{\Real}{\mathbb{R}}
\newcommand{\T}{\mathsf{T}}
\newcommand{\co}{\mathsf c}
\newcommand{\base}{{\bf{e}}}
\newcommand{\eye}{\mathbf{I}}
\newcommand{\clos}{{\text{clos}}}
\newcommand{\inte}{{\text{int}}}
\newcommand{\alphavec}{\boldsymbol{\alpha}}
\newcommand{\Graph}{\mathcal{G}}
\newcommand{\Vertex}{\mathcal{V}}
\newcommand{\VertexG}{\mathcal{V}_{\rm{G}}}
\newcommand{\VertexL}{\mathcal{V}_{\rm{L}}}
\newcommand{\Edge}{\mathcal{E}}
\newcommand{\nG}{N_{\rm{G}}}
\newcommand{\nL}{N_{\rm{L}}}
\newcommand{\nGL}{N}
\newcommand{\CI}{\mathbf{I}}
\newcommand{\CL}{\mathbf{L}}
\newcommand{\CM}{\mathbf{C}}
\newcommand{\Z}{\mathbf{Z}}
\newcommand{\B}{\mathbf{B}}
\newcommand{\x}{\mathbf{x}}
\newcommand{\sgen}{\mathbf{s}^g}
\newcommand{\sload}{\mathbf{s}^l}
\newcommand{\pf}{\mathbf{p}}
\newcommand{\genllim}{\underline{\mathbf{s}}^g}
\newcommand{\genulim}{\overline{\mathbf{s}}^g}
\newcommand{\ang}{\boldsymbol{\theta}}
\newcommand{\f}{{\bf f}}
\newcommand{\h}{{\bf h}}
\newcommand{\M}{{\bf M}}
\newcommand{\pflow}{{\bf p}}
\newcommand{\taueq}{\boldsymbol{\tau}}
\newcommand{\mup}{\boldsymbol{\mu}_+}
\newcommand{\mum}{\boldsymbol{\mu}_-}
\newcommand{\lambdap}{\boldsymbol{\lambda}_+}
\newcommand{\lambdam}{\boldsymbol{\lambda}_-}
\newcommand{\setf}{\Omega_{\f}}
\newcommand{\setsl}{\Omega_{\sload}}
\newcommand{\setpara}{\Omega_{\para}}
\newcommand{\para}{{\boldsymbol{\xi}}}
\newcommand{\setslr}{\widetilde{\Omega}_{\sload}}
\newcommand{\setS}{\mathcal{S}}
\newcommand{\OPF}{\mathcal{OPF}}
\newcommand{\JM}{\mathbf{J}}
\newcommand{\setSgen}{\setS_{\rm G}}
\newcommand{\setSbra}{\setS_{\rm B}}
\newcommand{\Aeq}{{\bf A}_{\rm{eq}}}
\newcommand{\Ain}{{\bf A}_{\rm{in}}}
\newcommand{\beq}{{\bf b}_{\rm{eq}}}
\newcommand{\bin}{{\bf b}_{\rm{in}}}
\newcommand\fs@spaceruled{\def\@fs@cfont{\bfseries}\let\@fs@capt\floatc@ruled
  \def\@fs@pre{\vspace{0.2cm}\hrule height.8pt depth0pt \kern2pt}%
  \def\@fs@post{\kern2pt\hrule\relax}%
  \def\@fs@mid{\kern2pt\hrule\kern2pt}%
  \let\@fs@iftopcapt\iftrue}
\title{\LARGE \bf
Worst-Case Sensitivity of DC Optimal Power Flow Problems}
\author{James Anderson, Fengyu Zhou,  and Steven H. Low
\thanks{This work is funded by NSF grants CCF 1637598, ECCS 1619352, CNS 1545096, ARPA-E through grant DE-AR0000699 and the GRID DATA program, and DTRA through grant HDTRA 1-15-1-0003.}
\thanks{James Anderson is with the Department of Electrical Engineering and the Data Science Institute at Columbia University, New York, NY. Email: {\tt\small james.anderson@columbia.edu}}%
\thanks{Fengyu Zhou is with the Department of Electrical Engineering, California Institute of Technology, Pasadena, CA, 91125. Email: {\tt\small f.zhou@caltech.edu}}%
\thanks{Steven H. Low is with Department of Electrical Engineering and the Department of Computing and Mathematical Sciences, California Institute of Technology, Pasadena, CA, 91125. Email: {\tt\small slow@caltech.edu}}%
}
\begin{document}

\maketitle
\thispagestyle{empty}
\pagestyle{empty}

\begin{abstract}
In this paper we consider the problem of analyzing the  effect a change in the load vector can have on the optimal power generation in a DC power flow model. The methodology is based upon the recently introduced concept of the $\OPF$ operator. It is shown that for general network topologies computing the worst-case sensitivities is computationally intractable. However, we show that certain problems involving the $\OPF$ operator can be equivalently converted to a graphical discrete optimization problem. Using the discrete formulation, we provide a decomposition algorithm that reduces the computational cost of computing the worst-case sensitivity. A 27-bus numerical example is used to illustrate our results.

\end{abstract}


\section{Introduction}
An optimal power flow (OPF) problem is a mathematical program that searches for the optimal operating point of an electrical power network, subject to power flow equations and operational constraints~\cite{Hun1991survey, Woo2012power, FraR16}. An OPF solution can be used in many contexts; two prominent examples at opposite ends of the time-scale spectrum include market clearing and transmission grid maintenance and expansion.

The non-convex nature of OPF problems with AC power flow constraints has given rise to a large body of work that formulates tractable relaxations. We refer the reader to the following tutorials and survey papers (and the references therein) in order to see the current state of the art~\cite{Low2014convex,Low2014convex2,MolH19}. Despite the success of these approaches, they still lack the maturity and scalability for near real-time use. As a consequence, the DC power flow equations are often the model of choice~\cite{stott2009dc}. They offer the advantage of admitting a linear programming formulation as opposed to a non-convex quadratic program, or in the relaxed case a semidefinite or second-order cone program.  Work in ~\cite{PurMVB05,Li2007dcopf,KriM16} explores how  how good an approximation the DC power flow provides.

In this paper, we work with the more tractable DC optimal power flow problem. We are concerned with determining how the optimal power generation varies as demand and other network parameters change. For example if the cost of generation at a given bus changes, how will this affect the optimal solution? If the demand fluctuates, what happens to the the optimal solution? As the penetration of distributed energy resources increases (for example, causing generation limits to fluctuate depending on the weather), it will become more important to understand such questions.  Similar questions have  been addressed recently in~\cite{Roa19,BieS19,Gri1990optimal}. 

Our problem formulation is as follows; consider a vector of loads which we denote $\sload$ and a vector of power generations $\sgen$. Consider the linear program
\begin{eqnarray}
\underset{\sgen}{\text{minimize}}~~  && \f^{\T}\sgen \nonumber \\
\text{ subject to }& & \Aeq\sgen = \beq(\sload,\textbf b) \label{eq:opf0.b}\\
& &  \Ain\sgen \leq \bin \nonumber
\end{eqnarray}
\label{eq:opf0}
where $\f$ is a vector of generation costs (per unit time). The function $\beq$ is linear in both $\sload$ and $\textbf b$.\footnote{The vector $\textbf b$ is reserved as a placeholder for any constants which may affect the feasible domain where $\sgen$ resides, e.g., non-controllable generation.}
We are concerned with how an optimal $(\sgen)^{\star}$ changes as a function of $\sload$. We refer to such a quantity as the sensitivity (to be made more precise in Section~\ref{sec:sensitivity}), and of particular interest to us is the worst-case sensitivity.

To address this problem, we treat the OPF problem, which is a specific instance of~\eqref{eq:opf0.b} (defined in Section~\ref{sec:Model}), as an operator that maps $\sload$ to $(\sgen)^{\star}$. We refer to this as the $\OPF$ operator which we introduced in~\cite{ZhoAL19}. One of the goals of this paper is to introduce the reader to this new framework. In Section~\ref{sec:OPFop} we introduce the operator, and provide conditions under which its derivative (i.e., sensitivity) is well defined. We further show that this seemingly continuous property can be equivalently formulated as a discrete optimization problem. In Section~\ref{sec:sensitivity} a formal definition of the \emph{worst-case} sensitivity is given and three sample problems are described. Section~\ref{sec:alg} highlights some new insights that can be gained by viewing an OPF sensitivity problem in the discrete setting, and finally in Section~\ref{sec:example} we provide two numerical examples.

\subsection*{Notation}
Bold letters such as $\mathbf z$ and $\mathbf Y$ denote vectors and matrices, lower-case letters are reserved for vectors, and upper-case for matrices. When it is not clear from context, dimensions of specific matrices are indicated with a superscript,  for example the vector of $n$ zeros is $\mathbf 0^n$ and the $m\times m$ identity matrix is $\mathbf{I}^m$ (non-square matrix  definitions follow in an obvious manner). The only exceptions to this rule are $\sgen$ and $\sload$ which are reserved for generation and load vectors (as defined in Section~\ref{sec:Model}). We will frequently make use of matrices which are comprised of a subset of rows of another matrix; Let $\mathcal{I}$ denote a set of positive integers, then $\mathbf Y_{\mathcal I}$ is the matrix formed by stacking the rows of $\mathbf Y$ indexed by $\mathcal I$ on top of each other. All index sets are assumed to be ordered sets. Calligraphic letters are reserved for sets. An inequality constraint is said to be {\it binding} at the optimum if equality is achieved.

\section{Power Flow Model and Sensitivity} 
In Section~\ref{sec:Model} we describe the network and  power flow model throughout this work. In Sections~\ref{sec:sets} we introduce some parameter sets that we will make use of in the sequel.
\subsection{Network Model and DC Power Flow}\label{sec:Model}
We consider a DC power flow formulation. The network is modeled by an undirected graph $\Graph(\Vertex,\Edge)$, where the edge-set $\Edge \subseteq \Vertex \times \Vertex$ indicates there is an edge between two vertices. The set of vertices can be further classified into generator and load buses such that $\Vertex = \VertexG \cup \VertexL$ with $\VertexG$ denoting generator buses and $\VertexL$ denoting load buses. We assume that there are $N$ vertices in the network with $\nG$ generator buses and $\nL$ load buses. The cardinality of the edge-set is $E$. For simplicity, we assume that no vertex is both a generator and a load bus. Let $\CM \in \mathbb{R}^{N\times E}$ denote the incidence matrix of $\Graph$. Finally, let $\B=\text{diag}(b_1,\hdots, b_E)$ with $b_e>0$ indicating the susceptance of branch $e$. The Laplacian matrix is defined as $\CL= \CM\B\CM^{\T}$.

We denote the load and generation vectors by $\sgen \in \mathbb{R}^{\nG}$ and $\sload \in \mathbb{R}^{\nL}$ respectively. We index the vertices as follows, $\Vertex = \{v_1, \hdots, v_{\nG},v_{\nG+1}, \hdots,v_{\nG+\nL} \}$.  Thus, $\sgen_i$ refers to the generation on bus $i$, while $\sload_i$ refers to the load on bus $\nG + i$. We will refer to bus $\nG + i$ as load $i$ for simplicity. The power flow on edge $e \in \Edge$ is denoted as $\pf_e$, and $\pflow = [ p_1,\hdots,p_E]^\T$ is the vector of all branch power flows.

The DC power flow model assumes that the voltage magnitudes are fixed and known, and the lines are lossless.  The DC-OPF problem is a linear program:
\begin{subequations}
\begin{eqnarray}
\underset{\sgen, \ang}{\text{minimize}} ~~ && \f^{\T}\sgen
\label{eq:opf1.a}\\
\text{ subject to }& & \ang_1 = 0
\label{eq:opf1.b}\\
& &  \CL \ang = 
\left[ \begin{array}{c}
\sgen \\
-\sload
\end{array} \right] 
\label{eq:opf1.c}\\
& & \genllim \leq\sgen\leq \genulim
\label{eq:opf1.d}\\
& &  \underline{\pflow}\leq\B\CM^{\T}\ang\leq\overline{\pflow}.
\label{eq:opf1.e}
\end{eqnarray}
\label{eq:opf1}
\end{subequations}
The decision variables are the power generations $\sgen$ and voltage angles $\ang\in\Real^{\nGL}$. The cost vector $\f\in\Real_+^{\nG}$ is the unit cost for each generator and constraint~\eqref{eq:opf1.b} indicates that bus $1$ has been set as the slack-bus. All voltage magnitudes are fixed at $1$. In constraint \eqref{eq:opf1.c}, we let the injections for generators be positive while the injections for loads be $-\sload$. The upper and lower limits on the generations are set as $\genulim$ and $\genllim$, respectively, and $\overline{\pflow}$ and $\underline{\pflow}$ are the limits on branch power flows. We assume that~\eqref{eq:opf1} has a non-empty feasible set. 
\subsection{Set Definitions}\label{sec:sets}
The main objective of this paper is to quantify how changes in the load vector $\sload$ impact the optimal power generations as computed by solving~\eqref{eq:opf1}. Specifically we would like to obtain estimates of the derivatives $\frac{\partial (\sgen_i)^{\star}}{\partial \sload_j} $ for all $i\in [1,\hdots, \nG]$, $j\in [1,\hdots, \nL]$, where $(\sgen)^{\star}$ is the optimal vector of generations returned by~\eqref{eq:opf1}.\footnote{For the remainder of the paper, we shall adopt the notation $[\nL]$ to denote $1,\hdots, \nL$.}

As mentioned in the introduction, the sensitivity of an optimal power flow problem is a useful quantity to have in many practical applications. However, obtaining expressions for these derivatives, and as we shall see later, obtaining the ``worst-case" sensitivity is both analytically and computationally challenging. In previous work \cite{ZhoAL19} we rigorously formulated the sensitivity problem, characterized problem instances for which the binding constraints would not change under perturbation, and ensured that the optimal solution is unique and derivatives exist. In order to make the previous definitions precise, and before we can introduce the $\OPF$ operator, we require the following definitions.

Let $\para$ be a vector of $2\nG+2E$ network limits arranged as
\begin{equation*}
\para : = [(\genulim)^\T, (\genllim)^\T,\overline{\pflow}^\T,\underline{\pflow}^\T ]^\T.
\end{equation*}
Define the sets
\begin{align*}
&\setpara:=\{\para|\genllim\geq 0, \eqref{eq:opf1.b}-\eqref{eq:opf1.e}~\text{are feasible for some}~\sload>0\},\\
&\setsl(\para):=\{\sload | \sload>0, \eqref{eq:opf1.b}-\eqref{eq:opf1.e}~\text{are feasible}\}~\text{for $\para\in\setpara$}.
\end{align*}
The set $\setsl(\para)$ is  convex and non-empty. When $\para$ is clear we will simply refer to this set as $\setsl$. These two sets collect the parameters we care about. The next set ensures~\eqref{eq:opf1} has a unique solution:
\begin{align*}
\setf &:=\{\f\geq0~|~\forall\para\in\setpara,\sload\in\setsl(\para),~\text{\eqref{eq:opf1} has a  unique }\\
&\hspace{2cm} \text{solution, and $\geq\nG-1$ nonzero dual} \\ &\hspace{2cm} \text{variables at the optimal point.\} }\
\end{align*}
The definition above is in fact more restrictive than what is needed for uniqueness.
We impose the additional constraint on the number of non-zero dual variables as it paves the way for further desirable properties, 
where we show, that up to perturbation all the binding constraints are independent and there are exactly $\nG-1$ of them. 

 For completeness, the dual to the OPF-problem~\eqref{eq:opf1} is provided in the appendix. With these definitions in hand, we are ready to define the $\OPF$ operator abstraction of~\eqref{eq:opf1}.
\section{The $\OPF$ Operator and Worst-Case Sensitivity}\label{sec:OPFop}
\subsection{Existence and Smoothness}
Instead of dealing with the convex program~\eqref{eq:opf1} directly, we instead treat it as an operator that maps loads to optimal generations.
\begin{definition}
Assume $\f \in \setf$. Let $\OPF$ be the operator $\OPF:\setsl \rightarrow \mathbb{R}^{\nG}$ such that $\OPF(\sload)$ returns an optimal solution to~\eqref{eq:opf1}, i.e. $(\sgen)^{\star}=\OPF(\sload)$. 
\end{definition}
We collect a few observations pertaining to $\OPF:$
\begin{itemize}
\item The assumption that $\f \in \setf$ ensures that $\OPF$ is a singleton, i.e. it returns a unique element.
\item $\OPF$ defines a parametric linear program. Solution sets to parametric LPs are both upper and lower hemi-continuous, thus the $\OPF$ solution set inherits hemi-continuity. Furthermore, when $\f\in\setf$,  $\OPF$ is  continuous.
\item $\setf$ is dense in $\mathbb{R}_+^{\nG}$ (See Proposition 1 in \cite{ZhoAL19}). Thus, if $\f\notin \setf$ applying a small perturbation to $\f$ will with probability 1 ensure that $\f' \in \setf$.
So the assumption that $\f\in\setf$ is mild.
\end{itemize}
We require one final set definition before we can state the differentiability properties of the $\OPF$ operator. 
\begin{align*}
\setslr(\para,\f):=&\{\sload\in\setsl(\para)~|~\eqref{eq:opf1}~\text{has exactly }  \nG-1 \\ & \hspace{2.6cm}\text{ binding inequalities.}\}
\end{align*}
The $\nG-1$ binding inequalities condition above ensures (when combined with the restriction of $\f$ to $\setf$) that the set of binding inequalities are independent. This technical assumption is required in the proof of Theorem~\ref{thm:deriv}.
\begin{theorem}\label{thm:deriv}
Assume that $\f \in \setf$. Then there exists a dense set $\widetilde\Omega_{\para}(\f) \subseteq \setpara$ such that  for all $ \para \in \widetilde\Omega_{\para}(\f)$ the following hold:
\begin{enumerate}
\item $\clos(\inte(\setsl(\para)))=\clos(\setsl(\para))$ 
\item $\setslr(\para,\f)$ is dense in $\setsl(\para)$. 
\end{enumerate}
Then, when $\f \in \setf$ and $\para \in \widetilde\Omega_{\para}(\f)$, the derivative $\partial_{\sload} \OPF(\sload)$ exists for $\sload\in\setslr$, and the set of binding constraints remain unchanged in some neighborhood of $\sload$.
\end{theorem}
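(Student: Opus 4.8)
The plan is to collapse the OPF program into a reduced linear program over the generation vector $\sgen$ alone, and then to invoke the standard sensitivity analysis for a parametric LP at a non-degenerate, strictly complementary optimum. Taking properties (1) and (2) as granted, the differentiability of $\OPF$ and the local invariance of the binding set will both follow once I show that the active-constraint system is square and invertible --- which is exactly the structure that the combination $\f\in\setf$ and $\sload\in\setslr$ is engineered to provide.

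First I would eliminate the angle variables. Because $\Graph$ is connected, $\CL=\CM\B\CM^{\T}$ has a one-dimensional kernel spanned by the all-ones vector, and the reference constraint \eqref{eq:opf1.b} removes that freedom. Hence \eqref{eq:opf1.c} is solvable in $\ang$ precisely when the injections balance, $\mathbf 1^{\T}\sgen=\mathbf 1^{\T}\sload$, and then $\ang=\CL^{\dagger}[\sgen^{\T},-\sload^{\T}]^{\T}$ for a suitable pseudo-inverse. Substituting into \eqref{eq:opf1.e} writes the branch flows $\B\CM^{\T}\ang$ as a fixed linear map (the PTDF) applied to $[\sgen^{\T},-\sload^{\T}]^{\T}$. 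This yields an LP in the single variable $\sgen\in\Real^{\nG}$: minimize $\f^{\T}\sgen$ subject to the single balance equality, the box constraints $\genllim\le\sgen\le\genulim$, and affine flow constraints inherited from \eqref{eq:opf1.e} whose right-hand sides depend affinely on $\sload$ and on $\para$. The feasible set has $\nG-1$ effective degrees of freedom, which is why pinning down a vertex requires exactly $\nG-1$ binding inequalities.

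Next I would assemble the active system at a point $\sload\in\setslr$. By definition of $\setslr$ there are exactly $\nG-1$ binding inequalities; stacking their normals together with the equality row $\mathbf 1^{\T}$ gives a square matrix $A\in\Real^{\nG\times\nG}$, and the optimum satisfies $A(\sgen)^{\star}=c(\para,\sload)$ with $c$ affine in $\sload$. The crucial step is to show $A$ is invertible, and here the two hypotheses interlock. Complementary slackness places every nonzero dual on a binding inequality, so $\f\in\setf$ (at least $\nG-1$ nonzero duals) forces at least $\nG-1$ binding inequalities, while $\setslr$ permits at most that many; hence exactly $\nG-1$ bind and all carry strictly positive duals, i.e. strict complementarity holds. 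Uniqueness of the optimum (again from $\f\in\setf$) together with exactly $\nG$ active constraints then forces the active rows to be linearly independent, so $A$ is nonsingular. I expect this independence argument to be the main obstacle, as it is precisely where the otherwise-generic degeneracies must be ruled out --- the technical role the text already reserves for $\setf$ and $\setslr$.

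Finally I would close with a continuity/KKT perturbation argument. At $\sload$ the inactive inequalities hold strictly and the active duals are strictly positive; since $\OPF$ is continuous, both strict inequalities persist on a neighborhood of $\sload$. On that neighborhood the candidate $\sgen(\sload')=A^{-1}c(\para,\sload')$ stays primal feasible (inactive constraints remain slack) and satisfies the KKT conditions with the same, still-positive duals, so it remains the unique optimum and the binding set is unchanged. Because $c(\para,\cdot)$ is affine and $A$ is a fixed invertible matrix, $\sload'\mapsto\OPF(\sload')=A^{-1}c(\para,\sload')$ is affine, hence differentiable, with $\partial_{\sload}\OPF(\sload)=A^{-1}\partial_{\sload}c$. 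Properties (1) and (2) are what make $\setslr$ a dense locus on which this assertion is meaningful; should the construction of the dense parameter set $\widetilde\Omega_{\para}(\f)$ itself need to be established, I would do so by a transversality/genericity argument showing that the coincidental alignments of constraint boundaries which would violate (1) or the exactness in (2) occur only on a measure-zero subset of $\setpara$.
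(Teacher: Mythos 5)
Your treatment of the theorem's final assertion --- that $\partial_{\sload}\OPF$ exists on $\setslr$ and the binding set is locally constant --- is correct, and it takes a genuinely different route from the paper. The paper disposes of this half in a single sentence by invoking Lemma 4.1 of \cite{Gri1990optimal}; you instead reprove that sensitivity result from scratch in the LP setting: eliminate $\ang$ via connectivity and the slack-bus constraint to obtain a reduced LP in $\sgen$ with one balance equality; note that $\f\in\setf$ (uniqueness plus $\geq\nG-1$ nonzero inequality multipliers) combined with $\sload\in\setslr$ (exactly $\nG-1$ binding inequalities) yields strict complementarity; use ``unique optimum $\Rightarrow$ extreme point $\Rightarrow$ the $\nG$ active normals span $\Real^{\nG}$'' to conclude the active system $A$ is invertible (this also cleanly rules out degeneracies such as both bounds of one generator binding); and finish with the standard KKT perturbation argument giving $\OPF(\sload')=A^{-1}c(\para,\sload')$ locally, since stationarity does not involve $\sload'$ and the strictly positive duals and strictly slack inactive constraints persist under perturbation. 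Each step is sound (modulo the routine fact that inequality multipliers are unchanged when the equality-determined variable $\ang$ is eliminated). Your argument even buys something extra: it shows every $\sload\in\setslr$ lies in $\inte(\setsl(\para))$, so the derivative is well defined, and it never uses properties (1)--(2) for this half, whereas the paper's citation-based proof bundles them into the hypotheses of the quoted lemma.

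The gap is the first half of the statement: the existence of a \emph{dense} set $\widetilde\Omega_{\para}(\f)\subseteq\setpara$ on which (1) and (2) hold is part of the assertion, not a hypothesis, and you take it as granted. The paper does not prove it in-text either --- it defers to Appendix C of \cite{ZhoAL19}, calling the argument ``somewhat involved'' --- so you are not behind the printed proof in rigor, but your closing sketch would not serve as a substitute. You propose a genericity argument showing the bad parameters form a measure-zero subset of $\setpara$; that framing can plausibly address the existence of the dense set and property (1), but it does not touch property (2), which for each \emph{fixed} good $\para$ is a density claim in load space (that $\setslr(\para,\f)$ is dense in $\setsl(\para)$). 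Establishing (2) requires an argument over $\sload$ (or the joint parameter-load space) showing that loads producing fewer than $\nG-1$, or more than $\nG-1$, binding inequalities are nowhere dense in $\setsl(\para)$; genericity in $\para$ alone cannot deliver this.
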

\begin{proof}
The proof of the two topological properties of $\setslr(\para,\f)$ is somewhat involved but can be found in Appendix C of~\cite{ZhoAL19}. With these definitions in hand, by construction, the appropriate sets possess the  necessary topological properties such that when combined with the OPF problem~\eqref{eq:opf1}, they satisfy all the necessary conditions in Lemma 4.1 of \cite{Gri1990optimal} which guarantee that the derivatives always exist and binding constraints do not change locally. 

\end{proof}
\begin{corollary}\label{Co:independent}
If  $\sload\in\setslr$, the $\nG-1$ binding inequalities, along with $N+1$ equality constraints, are independent. 
\end{corollary}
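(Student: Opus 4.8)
The plan is to recognize that the corollary is really a statement about a single \emph{square} linear system. Stacking the gradients (with respect to the decision vector $(\sgen,\ang)\in\Real^{\nG+N}$) of the $N+1$ equalities \eqref{eq:opf1.b}--\eqref{eq:opf1.c} and the $\nG-1$ binding inequalities drawn from \eqref{eq:opf1.d}--\eqref{eq:opf1.e} produces a matrix $\M$ with $(N+1)+(\nG-1)=N+\nG=2\nG+\nL$ rows. Since the decision vector $(\sgen,\ang)$ also has dimension $\nG+N=2\nG+\nL$, the matrix $\M$ is square, and the claimed independence of the constraints is exactly equivalent to $\M$ being nonsingular. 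So I would reduce the entire corollary to showing that $\ker\M=\{\zero\}$.

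First I would fix $\sload\in\setslr$ and let $(\sgen^\star,\ang^\star)$ denote the optimizer of \eqref{eq:opf1}, which is unique because $\f\in\setf$. The defining property of $\setslr$ is that \emph{exactly} $\nG-1$ inequalities bind at this point, so every inequality \emph{not} appearing as a row of $\M$ holds strictly, and all equalities appear in $\M$. I would then argue by contradiction: suppose $\M$ is singular and choose $0\neq v=(\delta\sgen,\delta\ang)$ with $\M v=\zero$. Because each row of $\M$ is the gradient of an active constraint, the perturbed point $(\sgen^\star,\ang^\star)+tv$ satisfies every equality in \eqref{eq:opf1.b}--\eqref{eq:opf1.c} exactly and keeps each binding inequality at equality for all $t$; and since the remaining inequalities hold strictly at the optimum, they persist for $|t|$ small. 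Hence the perturbed point is feasible on a small interval around $t=0$.

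The conclusion then splits on the objective slope $\f^\T\delta\sgen$. If $\f^\T\delta\sgen\neq0$, moving along $v$ in the direction that decreases the linear objective produces a strictly better feasible point, contradicting optimality of $(\sgen^\star,\ang^\star)$. If $\f^\T\delta\sgen=0$, the objective is constant along the feasible segment, so $(\sgen^\star,\ang^\star)\pm tv$ are two distinct optimal solutions, contradicting the uniqueness of the optimizer guaranteed by $\f\in\setf$. In either case $v=\zero$, so $\M$ is nonsingular and the $N+\nG$ constraints are linearly independent.

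The step I expect to be the crux is making the feasibility bookkeeping airtight: verifying that a null direction of $\M$ genuinely preserves feasibility of the perturbed point. This is precisely where the \emph{exactly} $\nG-1$ binding count from $\setslr$ is indispensable — a binding inequality omitted from $\M$ (i.e. a degenerate vertex) could be violated by the perturbation and break the argument — and it is also why one must invoke uniqueness of the full pair $(\sgen,\ang)$, not merely of $\sgen$, to dispose of the degenerate case $\f^\T\delta\sgen=0$. Both ingredients are exactly the properties conferred by $\f\in\setf$ and $\sload\in\setslr$ through Theorem~\ref{thm:deriv}, which is what allows the corollary to follow directly from the preceding development.
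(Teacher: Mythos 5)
Your proof is correct, but it takes a genuinely different route from the paper, which never argues this corollary directly: there the statement is presented as an immediate byproduct of Theorem~\ref{thm:deriv} and the constructions of $\setf$ and $\setslr$, with the underlying independence argument deferred to Appendix C of \cite{ZhoAL19} and the parametric-LP machinery (Lemma 4.1) of \cite{Gri1990optimal}. Your argument is the classical, self-contained LP one: the gradients of the $N+1$ equalities and the exactly $\nG-1$ binding inequalities form a square $(N+\nG)\times(N+\nG)$ matrix; a nonzero null direction stays feasible for small $|t|$ (here the \emph{exactly} $\nG-1$ count from $\setslr$ is precisely what rules out an omitted binding constraint being violated), and then either the objective has nonzero slope along it, contradicting optimality, or the optimum is non-unique, contradicting $\f\in\setf$. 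Two remarks. First, your concern about needing uniqueness of the full pair $(\sgen,\ang)$ rather than of $\sgen$ alone can be dispensed with: by connectivity of $\Graph$, a null direction with $\delta\sgen=\mathbf 0$ satisfies $\CL\,\delta\ang=\mathbf 0$ and $\delta\ang_1=0$, and since $\ker\CL$ is spanned by the all-ones vector this forces $\delta\ang=\mathbf 0$; so uniqueness of the generation vector suffices. Second, your proof has a side benefit: the square matrix you build is exactly the matrix $\Delta$ appearing in the paper's Jacobian lemma, whose invertibility is asserted there with ``proof omitted,'' so your argument fills that gap as well. What the paper's route buys is the stronger package of Theorem~\ref{thm:deriv} (local constancy of the binding set and differentiability), of which independence falls out for free; what yours buys is an elementary proof with no reliance on external references.
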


Now, suppose at point $\sload$, the set of generators corresponding to binding inequalities is $\setSgen\subseteq\VertexG$, while the set of branches corresponding to binding inequalities is $\setSbra\subseteq\Edge$. As a consequence of 1) and 2) in Theorem~\ref{thm:deriv} we obtain the following:
\begin{corollary}
When $\f\in\setf, \para\in\widetilde\Omega_{\para}(\f), \sload\in\setslr(\para,\f)$, we have 
\begin{equation*}
|\setSgen|+|\setSbra|=\nG-1.
\end{equation*}
\end{corollary}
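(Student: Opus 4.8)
The plan is to show that the corollary is a direct counting consequence of the defining property of $\setslr(\para,\f)$ together with the independence result of Corollary~\ref{Co:independent}. First I would partition the inequality constraints of~\eqref{eq:opf1} according to their type: the constraints~\eqref{eq:opf1.d} are the generation limits $\genllim\leq\sgen\leq\genulim$, and the constraints~\eqref{eq:opf1.e} are the branch-flow limits $\underline{\pflow}\leq\B\CM^\T\ang\leq\overline{\pflow}$; together these exhaust all inequality constraints. By the definitions preceding the statement, $\setSgen$ is exactly the set of generators whose generation limit is binding and $\setSbra$ is exactly the set of branches whose flow limit is binding, so every binding inequality is associated with a unique member of $\setSgen\cup\setSbra$ via the generator or branch it constrains.

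The crux is to verify that this association is a bijection between the binding inequalities and $\setSgen\cup\setSbra$, so that the number of binding inequalities equals $|\setSgen|+|\setSbra|$. Surjectivity holds by construction. For injectivity I would rule out the possibility that a single generator has both its upper and lower limits binding, or that a single branch has both its flow limits binding: in the full variable space $(\sgen,\ang)$ the gradients of the two bounds on generator $i$ are $\pm\base_i$ (padded with zeros in the $\ang$-block), and the gradients of the two flow bounds on branch $e$ are $\pm$ the $e$-th row of $\B\CM^\T$ (padded with zeros in the $\sgen$-block). In either case the two binding rows would be linearly dependent, contradicting Corollary~\ref{Co:independent}, which asserts that the $\nG-1$ binding inequalities are independent. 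Hence each generator and each branch contributes at most one binding inequality, the association is injective, and $|\setSgen|$ (resp.\ $|\setSbra|$) equals the number of binding generation limits (resp.\ branch-flow limits).

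It then remains to combine this identity with the hypothesis $\sload\in\setslr(\para,\f)$, which by definition guarantees that~\eqref{eq:opf1} has exactly $\nG-1$ binding inequalities. Summing over the two constraint families yields $|\setSgen|+|\setSbra|=\nG-1$, as required. I expect the only nontrivial point to be the injectivity step, namely excluding coincident binding bounds; this is handled cleanly by invoking the independence of the binding constraints from Corollary~\ref{Co:independent} rather than by a separate genericity argument, and everything else follows immediately from the set definitions.
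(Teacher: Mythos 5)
Your proof is correct and takes essentially the route the paper intends: the paper states this corollary without any explicit proof, treating it as immediate from the definition of $\setslr(\para,\f)$ (exactly $\nG-1$ binding inequalities) together with the identification of each binding inequality with a generator in $\setSgen$ or a branch in $\setSbra$. Your injectivity step---ruling out a generator or branch having both of its bounds simultaneously binding by noting that the two constraint rows would be negatives of each other, contradicting the independence in Corollary~\ref{Co:independent}---is exactly the detail the paper glosses over, and it is handled correctly.
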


We have placed a lot of emphasis on sets being dense, specifically, $(\setf, \setslr, \widetilde\Omega_{\para})$ being dense with respect to $( \mathbb{R}^{\nG}_+, \Omega_{\sload},\Omega_{\para})$. The reason for this is that if the parameter of the OPF problem under consideration does not satisfy the necessary assumptions, then there exists another parameter arbitrarily near by, that does. Thus applying a perturbation to the parameter will provide an OPF problem that does satisfy the necessary conditions for the derivative to be well defined. In summary when $\f,\para,\sload$ belong to $(\setf, \setslr, \widetilde\Omega_{\para})$, we have shown that $\OPF$ is well defined, has a unique solution, is differentiable, and at the optimal solution the binding constraints are independent.

\subsection{The Jacobian}\label{sec:jac}
The Jacobian matrix is one of the most fundamental components in sensitivity analysis. In this section, we will derive a closed-form expression for the Jacobian matrix that links the structure (properties of $\Graph$) with features of the solution to the OPF problem~\eqref{eq:opf1}. To ensure that the appropriate partial derivatives exist (almost everywhere), we assume w.l.o.g. that  $\f\in\setf, ~\para\in\widetilde\Omega_{\para}(\f), ~\text{and }\sload\in\setslr(\para,\f)$. We define the Jacobian matrix element-wise as
\begin{equation}\label{eq:jac_opf}
\left[ \JM(\sload;\f,\para)\right]_{i,j} := \frac{\partial (\sgen_i)^{\star}}{\partial \sload_j} = \frac{\partial [\OPF(\sload)]_i}{\partial \sload_j}.
\end{equation}
We will often omit the parameters $\para$ and $\f$ and the argument $\sload$ to lighten the notation. It should be clear though, that the Jacobian is defined with respect to a specific parameter realization $(\f,\para)$ and is evaluated at a given $\sload$.

The next lemma shows that $\JM(\sload;\f,\para)$ could be re-parameterized as a function of $\setSgen$ and $\setSbra$.

\begin{lemma}
The Jacobian $\JM(\sload;\f,\para)$ written as a function of the binding constraint takes the form
\begin{equation*}
\JM(\setSgen,\setSbra) = -\mathbf{\Psi}(\eye^N_{[N_L]})^\T,
\end{equation*}
where $\mathbf{\Psi} = \CI^N_{\VertexG}\CL \Z(\setSgen,\setSbra)^\T$ and
\begin{equation*}
\Z(\setSgen,\setSbra)^\T = \left[\begin{array}{c} \CI^N_{\VertexL}\CL\\ \CI^N_{\setSgen} \CL \\ \CI^E_{\setSbra}\B\CM^\T \\ \base_1^\T \end{array} \right]^{-1}.
\end{equation*}
\end{lemma}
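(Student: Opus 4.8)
The plan is to exploit Theorem~\ref{thm:deriv}: since $\f\in\setf$, $\para\in\widetilde\Omega_{\para}(\f)$ and $\sload\in\setslr(\para,\f)$, the set of binding constraints is constant in a neighbourhood of $\sload$. Consequently, locally the optimal primal variables $(\sgen,\ang)$ are the unique solution of the linear system obtained by imposing all active constraints as equalities. First I would list these: the slack constraint $\base_1^\T\ang=0$; the power balance~\eqref{eq:opf1.c}, split into its generator block $\CI^N_{\VertexG}\CL\ang=\sgen$ and its load block $\CI^N_{\VertexL}\CL\ang=-\sload$; the binding generation limits, which fix $\sgen_i$ to a constant entry of $\para$ for $i\in\setSgen$; and the binding flow limits $\CI^E_{\setSbra}\B\CM^\T\ang=\text{const}$. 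Counting gives $N+1$ equalities together with the $\nG-1$ binding inequalities (the cardinality relation $|\setSgen|+|\setSbra|=\nG-1$), hence $N+\nG$ equations in the $N+\nG$ unknowns $(\sgen,\ang)$, and Corollary~\ref{Co:independent} guarantees this square system is nonsingular, so the local solution map is well defined and differentiable.

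Next I would eliminate $\sgen$. The generator block of the power balance yields $\sgen=\CI^N_{\VertexG}\CL\ang$ directly, so it suffices to determine $\ang$. Substituting this into the binding generation limits and using $\eye^{\nG}_{\setSgen}\CI^N_{\VertexG}=\CI^N_{\setSgen}$ turns those rows into $\CI^N_{\setSgen}\CL\ang=\text{const}$. Collecting the load block, the reduced binding generation rows, the binding flow rows, and the slack row produces the $N\times N$ system
\begin{equation*}
\left[\begin{array}{c} \CI^N_{\VertexL}\CL\\ \CI^N_{\setSgen} \CL \\ \CI^E_{\setSbra}\B\CM^\T \\ \base_1^\T \end{array}\right]\ang = \left[\begin{array}{c} -\sload \\ \text{const} \\ \text{const} \\ 0 \end{array}\right].
\end{equation*}
Its coefficient matrix is, by definition, the inverse of $\Z(\setSgen,\setSbra)^\T$, and its invertibility follows from nonsingularity of the full active system, since passing to the reduced system is block elimination through the trivially invertible coefficient $-\eye^{\nG}$ of $\sgen$. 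Hence $\ang=\Z^\T\,[\,-(\sload)^\T,\ \text{const},\ \text{const},\ 0\,]^\T$.

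Finally I would differentiate with respect to $\sload$. The limit constants and the slack entry do not depend on $\sload$ (the active set is frozen locally and the parameters $\para$ are held fixed), so only the load block contributes; because $-\sload$ occupies the top $\nL$ rows of the right-hand side, its derivative is the selection matrix $-(\eye^N_{[N_L]})^\T$. Therefore $\partial_{\sload}\ang=-\Z^\T(\eye^N_{[N_L]})^\T$, and applying $\sgen=\CI^N_{\VertexG}\CL\ang$ through the chain rule gives
\begin{equation*}
\JM=\CI^N_{\VertexG}\CL\,\partial_{\sload}\ang=-\CI^N_{\VertexG}\CL\Z^\T(\eye^N_{[N_L]})^\T=-\mathbf{\Psi}(\eye^N_{[N_L]})^\T,
\end{equation*}
which is the claimed form with $\mathbf{\Psi}=\CI^N_{\VertexG}\CL\Z(\setSgen,\setSbra)^\T$. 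The hard part will be the first step: rigorously justifying that the optimizer locally coincides with the solution of the square active-constraint system and that this system is nonsingular. This is exactly where Theorem~\ref{thm:deriv} (constant binding set and existence of the derivative) and Corollary~\ref{Co:independent} (independence of the $N+\nG$ active constraints) are indispensable; the remaining elimination of $\sgen$ and the identification of the load-selection matrix are routine algebra.
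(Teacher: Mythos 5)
Your proof is correct and follows essentially the same route as the paper: both freeze the binding set locally (via Theorem~\ref{thm:deriv}), assemble the square system of active constraints, use independence (Corollary~\ref{Co:independent}) to invert it, and read off the generator--load sensitivity block. The only difference is cosmetic --- you eliminate $\sgen$ by hand to reach the reduced $N\times N$ system whose inverse is $\Z(\setSgen,\setSbra)^\T$, whereas the paper forms the full map $\Delta$ on $(\sgen,\ang)$ and extracts the same block via the block-matrix inversion lemma.
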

\begin{proof}[sketch]
We first write down a mapping $\Delta$ from decision-variable space to parameter-- and load--space :
\begin{equation*}
 \left[\begin{array}{c} \mathbf{0} \\ \hdashline[0.4pt/1pt]  \sload \\ \mathbf \Gamma^\T \para \\ 0 \end{array}\right] = \Delta \left[\begin{array}{c}\sgen \\ \hdashline[0.4pt/1pt] \ang \end{array}\right]
\end{equation*}
where 
\begin{equation*}
\Delta = \left[ \begin{array}{c; {0.4pt/1pt} c} \CI^{N_G}  & \CI^N_{\VertexG}\CL \\ \hdashline[0.4pt/1pt]  \mathbf{0}^{N_L\times N_G} & \CI^N_{\VertexL} \CL\\ \CI^{N_G}_{\setSgen} & \mathbf{0}^{|\setSgen |\times N} \\  \mathbf{0}^{|\setSbra |\times N_G} & \CI^E_{\setSbra}\B \CM^\T \\ \mathbf{0}^{1 \times N_G} & \base_1^\T
\end{array}\right],
\end{equation*}
and $\mathbf{\Gamma}$ is a matrix with columns given by standard basis vectors such that $\mathbf \Gamma^T \para$ is a vector whose elements are equal to the capacity and branch flow limits of the binding constraints. It can be shown (proof omitted) that the rows of $\Delta$ are independent and thus $\Delta^{-1}$ exists. This inverse provides a map in the reverse direction (from load- and parameter-space to decision variable space.) As we are only concerned with the map to the optimal generations we only want an expression for the $(1,2)$-block of $\Delta^{-1}$, which we denote as $\mathbf{\Psi}$, where the partitions are as indicated by the dashed lines. The expression follows by applying the block-matrix inversion lemma and some routine linear algebra.
\end{proof}

We end this section with a result that links the $\OPF$ formulation of the Jacobian~\eqref{eq:jac_opf}, with the binding constraint formulation proved above. Before doing so we define the notion of  independent binding constraints. Consider the linear program $\{\text{minimize}_{\x}~ \mathbf c^\T\x \text{ s.t. }\mathbf{A}\x \le \mathbf b\}$, a solution to this LP will have several binding constraints, i.e., $(\mathbf{A}\x)_i = \mathbf b_i$ for $i$ in some set $\mathcal{S}$. We say that the binding constraints are independent if the rows $\mathbf A_{\mathcal S}$ are linearly independent.

\begin{theorem}\label{thm:Jac}
The Jacobian $\JM(\sload;\f,\para)$ derived from $\OPF$, and the Jacobian $\JM(\setSgen, \setSbra)$ expressed as a function of the binding constraints satisfy 
\begin{align*}
\mathbf{range}(\JM(\sload;\f,\para)) = \mathbf{range}(\JM(\setSgen, \setSbra))
\end{align*}
when
\begin{align*}
 \f\in\setf, \quad \para\in\widetilde\Omega_{\para}(\f), \quad \sload\in\setslr(\para,\f),
\end{align*}
and
\begin{align}\label{eq:disc_cons}
\setSgen \subseteq \VertexG,~ \setSbra \subseteq \Edge,~ |\setSgen |+|\setSbra |= N_G-1, ~ \setSgen \perp \setSbra , 
\end{align}
and $(\setSgen, \setSbra)$ correspond to the OPF problem defined by $\sload,\f,\para$. 
\footnote{The {\it range} refers to the set of values that $\JM(\sload; \f, \para)$ or $\JM(\setSgen, \setSbra)$ could take, rather than the column space of $\JM(\sload; \f, \para)$ or $\JM(\setSgen, \setSbra)$.}
\end{theorem}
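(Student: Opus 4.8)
The plan is to read $\mathbf{range}(\cdot)$ exactly as the footnote dictates — the set of matrices attained as the arguments sweep over their admissible values — so that the assertion becomes a set equality between the family of OPF Jacobians $\{\JM(\sload;\f,\para)\}$, taken over all admissible triples $\f\in\setf$, $\para\in\widetilde\Omega_{\para}(\f)$, $\sload\in\setslr(\para,\f)$, and the family of combinatorial Jacobians $\{\JM(\setSgen,\setSbra)\}$, taken over all pairs satisfying~\eqref{eq:disc_cons}. I would prove this by establishing the two inclusions separately.

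The forward inclusion ($\subseteq$) is essentially bookkeeping on the results already in hand. Fix an admissible triple $(\f,\para,\sload)$ and let $(\setSgen,\setSbra)$ be the binding sets it induces. Theorem~\ref{thm:deriv} guarantees that the Jacobian exists and that the active set is locally constant, Corollary~\ref{Co:independent} and the corollary following it supply precisely the cardinality condition $|\setSgen|+|\setSbra|=\nG-1$ and the independence $\setSgen\perp\setSbra$, i.e.\ exactly~\eqref{eq:disc_cons}. The preceding (Jacobian) lemma then identifies $\JM(\sload;\f,\para)$ with $\JM(\setSgen,\setSbra)$ as matrices, so every element of the left family lies in the right family.

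The reverse inclusion ($\supseteq$) is the substance of the theorem and amounts to a realizability statement: every combinatorially admissible pair $(\setSgen,\setSbra)$ is actually the active set of some genuine OPF instance. Given such a pair, the independence in~\eqref{eq:disc_cons}, together with the structural rows $\CI^N_{\VertexL}\CL$ and $\base_1^\T$, makes $\Z(\setSgen,\setSbra)$ well-defined and the map $\Delta$ of the Jacobian lemma invertible — exactly the non-degeneracy already used there. Picking any interior load $\sload>0$ and inverting the square map $\Delta$ produces a candidate $(\sgen,\ang)$ at which the chosen generator and branch constraints hold with equality. I would then \emph{choose} the limit vector $\para$ so that the constraints indexed by $(\setSgen,\setSbra)$ sit exactly at their bounds while every remaining bound is set slack, and \emph{choose} the cost $\f$ so that this feasible vertex is the unique optimizer. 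Once optimality is certified, the induced active set is exactly $(\setSgen,\setSbra)$, the Jacobian lemma yields $\JM(\setSgen,\setSbra)=\JM(\sload;\f,\para)$, and density of $\setf,\widetilde\Omega_{\para},\setslr$ (invoked through Theorem~\ref{thm:deriv}, under which the active set is locally constant) lets me perturb $(\f,\para,\sload)$ into the three ``nice'' sets without disturbing the active set.

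The main obstacle is the construction of $\f$ in this reverse inclusion. Feasibility of the prescribed active set is immediate from invertibility of $\Delta$, but turning that feasible vertex into an \emph{optimal} one requires a dual certificate: multipliers supported on $(\setSgen,\setSbra)$ and the equality constraints that satisfy stationarity with the correct nonnegative signs on the inequality multipliers, and that are nonzero on at least $\nG-1$ of them so that $\f\in\setf$. Independence of the active rows guarantees the stationarity system is solvable for \emph{some} $\f$; the delicate point — and the step I expect to demand the most care — is arranging the signs of the inequality multipliers so the vertex is genuinely optimal rather than merely stationary, while simultaneously landing in $(\setf,\widetilde\Omega_{\para},\setslr)$. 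In effect this is the converse to the necessary conditions proved earlier: that the combinatorial constraints~\eqref{eq:disc_cons} are not only necessary but also sufficient for a configuration to arise from a bona fide DC-OPF optimum.
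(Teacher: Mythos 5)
Your reading of $\mathbf{range}(\cdot)$ and your split into two inclusions is exactly the structure the paper intends: the Remark following Theorem~\ref{thm:Jac} says the inclusion $\mathbf{range}(\JM(\sload;\f,\para)) \subseteq \mathbf{range}(\JM(\setSgen,\setSbra))$ is straightforward, and that the substance of the theorem is the converse realizability claim. (The paper itself gives no argument at all; it defers entirely to \cite[\S 4.3]{ZhoAL19}.) Your forward inclusion is correct bookkeeping. The problem is that your reverse inclusion stops being a proof precisely where the theorem lives, and the one mechanism you do invoke there is the wrong tool.

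Two concrete gaps. First, membership in $\setf$ cannot simply be ``chosen'': by definition, $\f\in\setf$ demands a unique optimizer and at least $\nG-1$ nonzero dual variables for \emph{every} $\para\in\setpara$ and every $\sload\in\setsl(\para)$ --- it is a global property of $\f$, not a property of the single instance you construct. Reverse-engineering a dual certificate (pick positive multipliers on the active rows and solve stationarity for $\f$) certifies optimality of your one vertex, but it neither guarantees $\f\ge 0$ (stationarity reads $\f=[\taueq_1,\ldots,\taueq_{\nG}]^{\T}-\lambdap+\lambdam$, whose sign you do not control) nor places $\f$ in $\setf$. Second, your perturbation step is circular: the local constancy of the binding set in Theorem~\ref{thm:deriv} holds at points that are \emph{already} in $(\setf,\widetilde\Omega_{\para}(\f),\setslr(\para,\f))$, and only with respect to $\sload$; it says nothing about preserving the active set while you move a constructed, possibly non-nice triple $(\f,\para,\sload)$ \emph{into} those sets. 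What you actually need is the standard LP stability fact --- at a nondegenerate vertex with strict complementarity, the optimal active set is invariant under sufficiently small perturbations of all the problem data --- established for your constructed instance, and then combined with density of the three nice sets. Until that chain (nonnegativity and globality of $\f$, strict complementarity, data-perturbation stability) is written down, the reverse inclusion, i.e.\ the theorem itself, remains unproved.
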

\begin{proof}
The proof of this result can be found in \cite[\S 4.3]{ZhoAL19}. 
\end{proof}

The notation $\setSgen \perp \setSbra$ in~\eqref{eq:disc_cons} is used to indicate that $\mathbf A_{\setSgen \cup \mathcal{S}_{\mathrm B}}$  has linearly independent rows, where $\mathbf A$ is the standard form version of~\eqref{eq:opf1} -- this is explicitly formulated in Appendix~\ref{sec:standard}. The set $\mathcal S_{\mathrm{B}}$ is an index set that corresponds to rows of the constraint matrix $\mathbf A$ (see appendix ~\ref{sec:standard} ) which in turn  corresponds to binding constraints on the edges as identified in $\setSbra$. 

What we find interesting (and hopefully useful) about Theorem~\ref{thm:Jac} is that the two formulations capture completely different aspects of the problem, yet they are equivalent.\footnote{Note that they are only equivalent when $(\setSgen,\setSbra)$ and $\f,\para,\sload$ map to each other through the same OPF.} $\JM(\sload;\f,\para)$ depends on a continuous optimization problem~\eqref{eq:opf1} and involves physical parameters such as loads, limits, and a cost function. 
In contrast, $\JM(\setSgen,\setSbra)$ has the discrete input space and depends purely on $\Graph (\Vertex, \Edge)$ given the binding sets $(\setSgen,\setSbra)$.

\begin{remark}

It is worth noting that though one direction of Theorem \ref{thm:Jac} that 
$\mathbf{range}(\JM(\sload;\f,\para)) \subseteq \mathbf{range}(\JM(\setSgen, \setSbra))$ is quite straightforward,
the other direction is in fact not obvious at the first glance as it requires any legal choice of binding sets $\setSgen$ and $\setSbra$ be exactly achieved in at least one realization of OPF.\footnote{More specifically, such realization needs to satisfy the condition that $\f\in\setf, \para\in\widetilde\Omega_{\para}(\f), \sload\in\setslr(\para,\f)$.}
As we will see in the following subsection, a direct consequence is while deriving the worst-case sensitivity, it is tight to change the decision variables from $(\sload;\f,\para)$ to $(\setSgen, \setSbra)$.
\end{remark}

\subsection{Worst Case Sensitivity}\label{sec:sensitivity}
The sensitivity of a solution to an optimal power flow problem has many immediate practical uses, as discussed in the introduction. There are indeed many sensitivity problems that can be formulated. Before we do so, it will be helpful to make concrete the link between continuity and the Jacobian matrix. To avoid notational overload we will refer to arbitrary functions and sets and then provide the definition specific to $\OPF$. 

Recall that a function $\h: \mathcal D\rightarrow \Real^n$ with $\mathcal D$ an open subset of $\Real^n$ is said to be Lipschitz on $\mathcal D$ if there exists some $L\ge 0$ such that
\begin{equation}\label{eq:lip}
\|\h(\x )- \h(\x ') \| \le L \|\x - \x'\|
\end{equation}
for all $\x,\x' \in \mathcal D$. Suppose that the Jacobian $\JM := \frac{\partial \h}{\partial \x}$ exists and is continuous on $D$. 
Then if for some convex subset $\mathcal B \subseteq \mathcal D$, there exists a constant $K \ge 0$ such that 
\begin{equation*}
\left\| \frac{\partial \h(\x)}{\partial \x}\right\| \le K
\end{equation*}
on $\mathcal B$, then~\eqref{eq:lip} holds for all $\x,\x' \in \mathcal B$ with $L=K$. This establishes a clear link between a bound on the norm of the Jacobian and the Lipschitz constant of a function. We now define the notion of Lipschitz continuity for a generator-load pair, and then formulate three sensitivity definitions.

\begin{definition}\label{def:lip}
The matrices $\CM, \B$ are fixed, as is the cost vector $\f \in \setf$ and $\para \in \widetilde\Omega_{\para}(\f)$. 
Select a generator $i$ and load $j$. The pair $(i,j)$ is said to be $C$-Lipschitz if for all $\delta>0$ and $\alphavec, \alphavec' \in \setsl(\para) $ such that $|\alphavec_j - \alphavec_j' |\le \delta$ and $\alphavec_k = \alphavec_k'$ for all $k \neq j$, we have that 
\begin{equation*}
|\OPF_i(\alphavec) - \OPF_i(\alphavec')|< C\delta,
\end{equation*}
where $\OPF_i(\cdot)$ denotes the $i^{\text{th}}$ coordinate of $\OPF(\cdot)$.
\end{definition}
This  Lipschitz-like definition forms the basis of the sensitivity analysis formulation we are proposing. In the remainder of this section we formulate several sensitivity problems that will be of interest to grid operators.
\begin{remark}
Recall that in our notation, when we refer to the $(i,j)$-generator-load pair, this corresponds to vertices $(v_i, v_{N_{\mathrm G}+j})$.
\end{remark}
\subsubsection{\textbf{Problem 1}} \textbf{SISO Sensitivity }\newline 
In this formulation we consider the problem of computing  the worst-case sensitivity of the generator-load pair  $(i,j)$. We use SISO to mean single-input, single-output, i.e. the change in one output when one input is changed. Recall that according to our indexing of vertices, load $j$ corresponds to the vertex $v_{N_G+j}$. 
\begin{definition}
 The (SISO) sensitivity of generator $i$ with respect to load $j$ is the minimum value which we denote by $C_{i\leftarrow j}$, such that $(i,j)$ is a  $C_{i\leftarrow j}$-Lipschitz pair,
 i.e., $C_{i\leftarrow j}$ is the minimal $C$ such that $|\OPF_i(\alphavec)-\OPF_i(\alphavec')|<C\delta$ for every $\alphavec,\alphavec'$ that differ only in their $j$\textsuperscript{th} coordinates with $|\alphavec_j-\alphavec'_j|\leq \delta$.
\end{definition}

\subsubsection{\textbf{Problem 2}} \textbf{Worst-Case SISO Sensitivity}\newline 
In the SISO sensitivity formulation, it was assumed that all the network parameters and the OPF cost function were fixed. In this version of the problem we allow the  network parameters to change (apart from those which define the network structure, e.g., $\CM$, the graph incidence matrix). 
\begin{definition}\label{def:wc}
The worst-case (SISO) sensitivity of generator $i$ with respect to load $j$ is 
\begin{equation}\label{eq:wcsen}
C_{i\leftarrow j}^{\mathrm{wc}}:=\underset{\f \in \setf }{\text{max}}~~\underset{\para \in\widetilde\Omega_{\para}(\f)  }{\text{max}}~C_{i\leftarrow j}.
\end{equation}
\end{definition}
The ability to allow parameter variations means  $C_{i\leftarrow j}^{\mathrm{wc}}$ provides information about various network scenarios. For example,  a generator instantaneously  going offline can be modeled by $\overline{\pflow},\underline{\pflow} \rightarrow \epsilon$, where $\epsilon$ is a small constant. Taking $\epsilon = 0$ would potentially break the independence conditions we require. In practice a small constant such as $\epsilon =  10^{-5}$ suffices. The quantity $C_{i\leftarrow j}^{\mathrm{wc}}$ plays an important role in  releasing power flow data in a differentially private manner~\cite{ZhoAL19a}. 

\subsubsection{\textbf{Problem 3}} \textbf{MISO Sensitivity}
\newline
Consider a set of $m$ load buses $\Vertex_{\mathrm L}'\subseteq \Vertex_\mathrm{L}$ and  let $\mathcal{L}$ denote the set of indices corresponding to those loads. The MISO part of the definition refers to the fact that here, we are interested in how a single output (generation) changes when multiple inputs (loads) are allowed to simultaneously change. To make this definition concrete, we must first  modify Definition~\ref{def:lip}. 
\begin{definition}
Assume that $\CM, \B$ are fixed, as is the cost vector $\f \in \setf$ and $\para \in \widetilde\Omega_{\para}(\f)$. We say that $(i,\mathcal L )$ is $C^{(m)}$-Lipschitz if for all $\delta >0$ and $\alphavec, \alphavec' \in \setsl(\para) $ such that $\|\alphavec - \alphavec' \|\le \delta$ and $\alphavec_k = \alphavec'_k$ for all $k \notin \mathcal L$, there exists a constant $C^{(m)}$ such that
\begin{equation*}
\|\OPF_i(\alphavec) - \OPF_i(\alphavec')\|< C^{(m)}\delta. 
\end{equation*}
\end{definition}
\begin{definition}
The (MISO) sensitivity of generator $i$ with respect to the set $\mathcal L$ of loads, denoted by $C_{i \leftarrow \mathcal L}$, is the minimum value of $C^{(m)}$ such that $(i,\mathcal L)$ is $C^{m}$-Lipschitz. 
\end{definition}
The worst-case MISO sensitivity problem can then be derived analogously to Definition~\ref{def:wc}.

\section{A Structural Result and Algorithm}\label{sec:alg}
In Section~\ref{sec:jac} we provide a derivation of the Jacobian matrix based on the sets of binding constraints $\setSbra$ and $\setSgen$. In this section we show that the combination of these sets and the OPF parameters allows us to gain structural information about the power network. The following lemma holds for any graph topology and provides a taste for the type of results we can aim for. For different graph structures we have similar results - these will be presented in a future paper.
In this section, the term graph partition refers to the process of reducing a graph into several smaller disjoint components by removing a subset of edges. A subset of edges that when removed disconnects the graph, is  known as a cut-set.
\begin{lemma}\label{lm:subgraph_is_not_full}
Suppose $\setSbra$ partitions $\Graph$ into $m$ disjoint subgraphs $\{\Graph_i(\Vertex_i,\Edge_i)\}_{i=1}^m$,
where $\cup_i \Vertex_i=\Vertex$ and $(\cup_i \Edge_i)\cup\setSbra=\Edge$.
Then for any $i$, we have $\setSgen^\co\cap\Vertex_i\neq\emptyset$ where $\setSgen^\co:=\VertexG\setminus\setSgen$.
\end{lemma}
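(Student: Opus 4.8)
The plan is to argue by contradiction, using the independence of the active constraints guaranteed by Corollary~\ref{Co:independent}. Suppose that for some component $\Graph_i(\Vertex_i,\Edge_i)$ we have $\setSgen^\co\cap\Vertex_i=\emptyset$; equivalently, every generator lying in $\Vertex_i$ has its generation limit binding, i.e. $\VertexG\cap\Vertex_i\subseteq\setSgen$. Under the standing assumptions ($\f\in\setf$, $\para\in\widetilde\Omega_{\para}(\f)$, $\sload\in\setslr$), Corollary~\ref{Co:independent} tells us that the $N+1$ equality rows of~\eqref{eq:opf1.b}--\eqref{eq:opf1.c} together with the $\nG-1$ binding inequality rows from~\eqref{eq:opf1.d}--\eqref{eq:opf1.e} are linearly independent (viewed as gradient rows in the variables $(\sgen,\ang)$). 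I would then produce a nontrivial linear combination of exactly these rows that vanishes, which contradicts independence and forces $\setSgen^\co\cap\Vertex_i\neq\emptyset$.

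The construction is to sum the nodal balance equations $\CL\ang=[\sgen;-\sload]$ over the vertices of the single component $\Vertex_i$. Writing $\Ind_{\Vertex_i}$ for the indicator (column) vector of $\Vertex_i$, the $\ang$-part of this summed row is $\Ind_{\Vertex_i}^\T\CL=\Ind_{\Vertex_i}^\T\CM\B\CM^\T$. The key structural fact is that $\Ind_{\Vertex_i}^\T\CM$ is supported only on edges with exactly one endpoint in $\Vertex_i$: for such an edge the corresponding entry is $\pm1$, whereas for every edge with both or neither endpoint in $\Vertex_i$ the two incidence entries cancel. Because $\{\Graph_i\}$ is the partition obtained by deleting $\setSbra$, every boundary edge of $\Vertex_i$ lies in $\setSbra$. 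Hence $\Ind_{\Vertex_i}^\T\CL=\sum_{e\in\mathrm{cut}(\Vertex_i)}\sigma_e(\B\CM^\T)_{e,\cdot}$ with $\sigma_e\in\{+1,-1\}$, where $\mathrm{cut}(\Vertex_i)\subseteq\setSbra$ denotes the boundary edges; that is, the $\ang$-part of the summed equality row is an exact $\pm1$ combination of the $\ang$-parts of the binding branch rows $h_e$. Likewise the $\sgen$-part of the summed equality row is $-\sum_{v\in\VertexG\cap\Vertex_i}\base_v$, and since $\VertexG\cap\Vertex_i\subseteq\setSgen$ this is exactly $-\sum_{v\in\VertexG\cap\Vertex_i}$ of the binding generation rows $g_v$.

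Subtracting these two matching combinations from the summed equality row cancels both the $\ang$- and the $\sgen$-parts, producing
\begin{equation*}
\sum_{v\in\Vertex_i}r_v-\sum_{e\in\mathrm{cut}(\Vertex_i)}\sigma_e h_e+\sum_{v\in\VertexG\cap\Vertex_i}g_v=\mathbf 0,
\end{equation*}
where $r_v$ are the nodal equality rows, $h_e$ the binding branch rows, and $g_v$ the binding generation rows. Since $\Vertex_i$ is nonempty and each $r_v$ enters with coefficient $1$, this is a genuine dependence among rows that Corollary~\ref{Co:independent} asserts to be independent, giving the contradiction. I expect the main obstacle to be the bookkeeping in the middle step: correctly recognizing that summing the Laplacian balance over a component collapses to a signed sum over its boundary (cut) edges, and confirming that these boundary edges all belong to $\setSbra$, so that the resulting combination uses only rows from the independent family. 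The result also admits a clean physical reading as the Kirchhoff/power-balance constraint for the sub-network $\Vertex_i$: if both its boundary flows and all its internal generations were pinned by binding constraints, the balance would over-determine the system unless $\Vertex_i$ retains at least one free generator.
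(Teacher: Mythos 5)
Your proposal is correct and follows essentially the same argument as the paper: a proof by contradiction that sums the nodal balance (Laplacian) rows over the component $\Vertex_i$, uses the cancellation of interior edges to collapse this sum into a signed combination of the binding cut-edge rows in $\setSbra$, absorbs the $\sgen$-entries via the binding generation rows, and thereby contradicts the independence guaranteed by Corollary~\ref{Co:independent}. The only difference is cosmetic sign bookkeeping (upper versus lower binding limits), which does not affect the argument.
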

\begin{proof}
If not, then all the generators in $\Vertex_i$ are binding. 
For fixed $i$,
let
\begin{align*}
\mathbf{T}:=
\left[
\begin{array}{c}
\CL\\
\B\CM^\T
\end{array} 
\right]
\end{align*}
and let $\Edge_0$ be the subset of $\setSbra$ satisfying $\forall e=(u,v)\in\Edge_0$, $u\not\in\Vertex_i$ and $v\in\Vertex_i$.
Consider the following constraints:
\begin{subequations}
\begin{eqnarray}
&\mkern-24mu\mathbf{T}_{\{j\}}\cdot\ang=\sgen_j &\text{for all}~j\in\Vertex_i\cap\VertexG
\label{eq:tightcon.a}\\
&\mkern-24mu\mathbf{T}_{\{j\}}\cdot\ang=-\sload_{j-\nG} &\text{for all}~j\in\Vertex_i\setminus\VertexG
\label{eq:tightcon.b}\\
&\mkern-24mu\sgen_j\in\big\{\genulim_j,\genllim_j\big\} &\text{for all}~j\in\Vertex_i\cap\VertexG
\label{eq:tightcon.c}\\
&\mkern-24mu\mathbf{T}_{\{\nGL+e\}}\cdot\ang\in\big\{\overline{\pflow}_e,\underline{\pflow}_e\big\} &\text{for all}~e\in\Edge_0
\label{eq:tightcon.d}
\end{eqnarray}
\label{eq:tightcon}
\end{subequations}
Note that
\begin{align*}
&\sum\limits_{j\in\Vertex_i}\mathbf{T}_{\{j\}}=
\sum\limits_{j\in\Vertex_i}\sum\limits_{e=(j,j')\in\Edge}(b_e\base_{j}^{\T}-b_e\base_{j'}^{\T})\\
=&\sum\limits_{j\in\Vertex_i}\sum\limits_{\tiny\substack{e=(j,j')\\e\in\Edge_i}}(b_e\base_{j}^{\T}-b_e\base_{j'}^{\T})
+\sum\limits_{j\in\Vertex_i}\sum\limits_{\tiny\substack{e=(j,j')\\e\in\Edge_0}}(b_e\base_{j}^{\T}-b_e\base_{j'}^{\T})\\
=&\sum\limits_{\tiny\substack{e=(j,j')\\e\in\Edge_i, j<j'}}(b_e\base_{j}^{\T}-b_e\base_{j'}^{\T})+(b_e\base_{j'}^{\T}-b_e\base_{j}^{\T})\\
&+\sum\limits_{j\in\Vertex_i}\sum\limits_{\tiny\substack{e=(j,j')\\e\in\Edge_0}}(b_e\base_{j}^{\T}-b_e\base_{j'}^{\T})\\
=&\sum\limits_{\tiny\substack{e=(u,v)\in\Edge_0\\u\not\in\Vertex_i, v\in\Vertex_i}}(b_e\base_{v}^{\T}-b_e\base_{u}^{\T})
=\sum\limits_{\tiny\substack{e=(u,v)\in\Edge_0\\u\not\in\Vertex_i, v\in\Vertex_i}}\CM_{v,e}\mathbf{T}_{\{N+e\}}.
\end{align*}
Here $\CM_{v,e}$ is the $(v,e)$ element of matrix $\CM$.
As a result, the summation of  \eqref{eq:tightcon.a} to \eqref{eq:tightcon.c} is linearly dependent to \eqref{eq:tightcon.d}, and contradicts Corollary \ref{Co:independent}.
Thereby, $\setSgen^\co\cap\Vertex_i\neq\emptyset$.
\end{proof}

Lemma~\ref{lm:subgraph_is_not_full} tells us  that under our assumptions, if we view $\setSbra$ as a cut of the graph, i.e., the graph $\Graph(\Vertex, \Edge\setminus \setSbra)$ has a disconnected component, then each subgraph contains at least one generator that is neither at maximum, nor minimum,  power production.  This information is  useful for network planning, for example when deciding where to add extra generation capacity in a network with transmission lines that are often saturated.

\subsection{Computing $C_{i\leftarrow j}^{\mathrm{wc}}$}
 It can be shown that the worst-case sensitivity is computed by solving a discrete optimization problem based on the binding constraint formulation of the Jacobian matrix. The formal statement and its proof are beyond the scope of the current paper, but we include it here for completeness: 
\begin{equation}\label{eq:disc-opt1}
C_{i\leftarrow j}^{\mathrm{wc}}=\max\limits_
{\tiny\substack{\setSgen\in\VertexG,\setSbra\in\Edge \\ |\setSgen|+|\setSbra|=\nG-1\\\setSgen\perp\setSbra}}|\JM_{i,j}|.
\end{equation}
Note that the constraints in the problem above are exactly~\eqref{eq:disc_cons} from Theorem~\ref{thm:Jac}. 
Unfortunately \eqref{eq:disc-opt1} is a non-convex, discrete optimization problem and thus intractable in general. However,  we provide a decomposition algorithm that produces small  sub-graphs such that a brute-force search is possible. Future work will examine how to relax~\eqref{eq:disc-opt1} to a more tractable problem.

\subsection{Decomposition Algorithm}
\floatstyle{spaceruled}
\restylefloat{algorithm}
\begin{algorithm} 
\caption{Decomposition of the computation of $C_{i\leftarrow j}^{\mathrm{wc}}$.}
\label{alg2}
\begin{algorithmic}
\renewcommand{\algorithmicrequire}{\textbf{Input:}}
\renewcommand{\algorithmicensure}{\textbf{Output:}}
\newcommand{\algorithmiccompute}{\textbf{compute }}
\newcommand{\CALL}{\STATE \algorithmiccall }
\newcommand{\algorithmiccall}{\textbf{call }}
\newcommand{\COMPUTE}{\STATE \algorithmiccompute }
    \REQUIRE $\B$, $\CM$, $i\in[\nG]$, $j\in[\nL]$, $\Graph(\VertexG\cup\VertexL,\Edge)$
    \ENSURE $C_{i\leftarrow j}^{\mathrm{wc}}$
    \FOR{$e=(u,v)$ in $\Edge^{\rm bri}$}
    	\IF{$u,v\not\in\VertexG$ and $v_i\Leftrightarrow v_{j+\nG}$ in $\Graph(\Vertex,\Edge\setminus\{e\})$}
		\STATE $e$ partitions $\Graph$ into $\Graph_1$ and $\Graph_2$ (assume $v_i, v_{j+\nG}$ are both in $\Graph_1$)
		\IF{$\Graph_2$ contains any vertex in $\VertexG$}
			\STATE Replace $\Graph_2$ by a single generator
		\ELSE
			\STATE Replace $\Graph_2$ by a single load
		\ENDIF
	\ENDIF
    \ENDFOR
    \STATE Find a shortest path connecting $v_i$ and $v_{j+\nG}$
    \STATE Get $\{\Graph_{l}\}_{l=1}^m$ and add $p_l$, $q_l$ to subgraphs
    \FOR{$l=0$ to $m-1$}
    	\CALL subroutine to compute $C_{p_l\leftarrow q_{l+1}}^{\mathrm{wc}}$
    \ENDFOR
    \STATE $C_{i\leftarrow j}^{\mathrm{wc}}\leftarrow \prod_{l=0}^{m-1} C_{u_l\leftarrow v_{l+1}}^{\mathrm{wc}}$
    \RETURN $C_{i\leftarrow j}^{\mathrm{wc}}$
\end{algorithmic}
\end{algorithm}

In this subsection, we provide an algorithm which can decompose the computation  of the worst-case SISO sensitivity of generator $i$ with respect to load $j$ into  sub-problems  involving computations on smaller graphs when $\Graph$ has bridges. 
Recall that the worst-case SISO sensitivity problem formulated in \eqref{eq:wcsen} is non convex and thereby difficult to solve for large networks. 
This algorithm aims to reduce the computational complexity by breaking the large-scale computation down into independent smaller tasks, which are usually much easier than the original problem and can be processed in parallel.

Here a bridge is an edge in $\Edge$ whose deletion disconnects the graph.
Define $\Edge^{\rm bri}$ as the set of bridges in $\Edge$.
In a not necessarily connected graph $\Graph'$, we say $v_i\Leftrightarrow v_{j+\nG}$ if there exists a path between nodes $v_i$ and $v_{j+\nG}$ .

In the first step, for any bridge $e=(u,v)\in\Edge^{\rm bri}$ that partitions $\Graph$ into $\Graph_1$ and $\Graph_2$,
 if $v_i\Leftrightarrow v_{j+\nG}$ after $e$ is deleted, then without loss of generality we assume both $v_i$ and $v_{j+\nG}$ are in $\Graph_1$.
 In this case we can replace the whole of $\Graph_2$ by a single bus. The rule is if $\Graph_2$ contains only load buses then it will be replaced by a single load, else it is replaced by a single generator.
 
 In the second step, we find a shortest path (in terms of the number of edges along the path) connecting $v_i$ and $v_{j+\nG}$,
 and the bridges along the path will partition the graph into subgraphs $\{\Graph_l(\Vertex_l,\Edge_l)\}_{l=1}^m$.
Assume the indices are assigned such that $\Graph_{l-1}$ is always closer to $v_i$ than $\Graph_l$. 
At the location of each bridge 
connecting $\Graph_{l-1}$ and $\Graph_l$, 
we add a single load $q_{l-1}$ to $\Graph_{l-1}$ and a single generator $p_{l-1}$ to $\Graph_l$, as shown in Figure \ref{Fig:Alg}.
For notational consistency,
we refer to $i$ as generator $p_0$ and $j$ as load $q_m$.
Then the computation of $C_{i\leftarrow j}^{\mathrm{wc}}$ can be composed as $\prod_{l=0}^{m-1} C_{p_l\leftarrow q_{l+1}}^{\mathrm{wc}}$,
where each $C_{p_l\leftarrow q_{l+1}}^{\mathrm{wc}}$ only depends on computing the sensitivity for smaller graphs.
This procedure is summarized as Algorithm \ref{alg2}.

\begin{figure*}[ht]
\vspace*{0.2cm}
\centering
\includegraphics[width=1.7\columnwidth]{./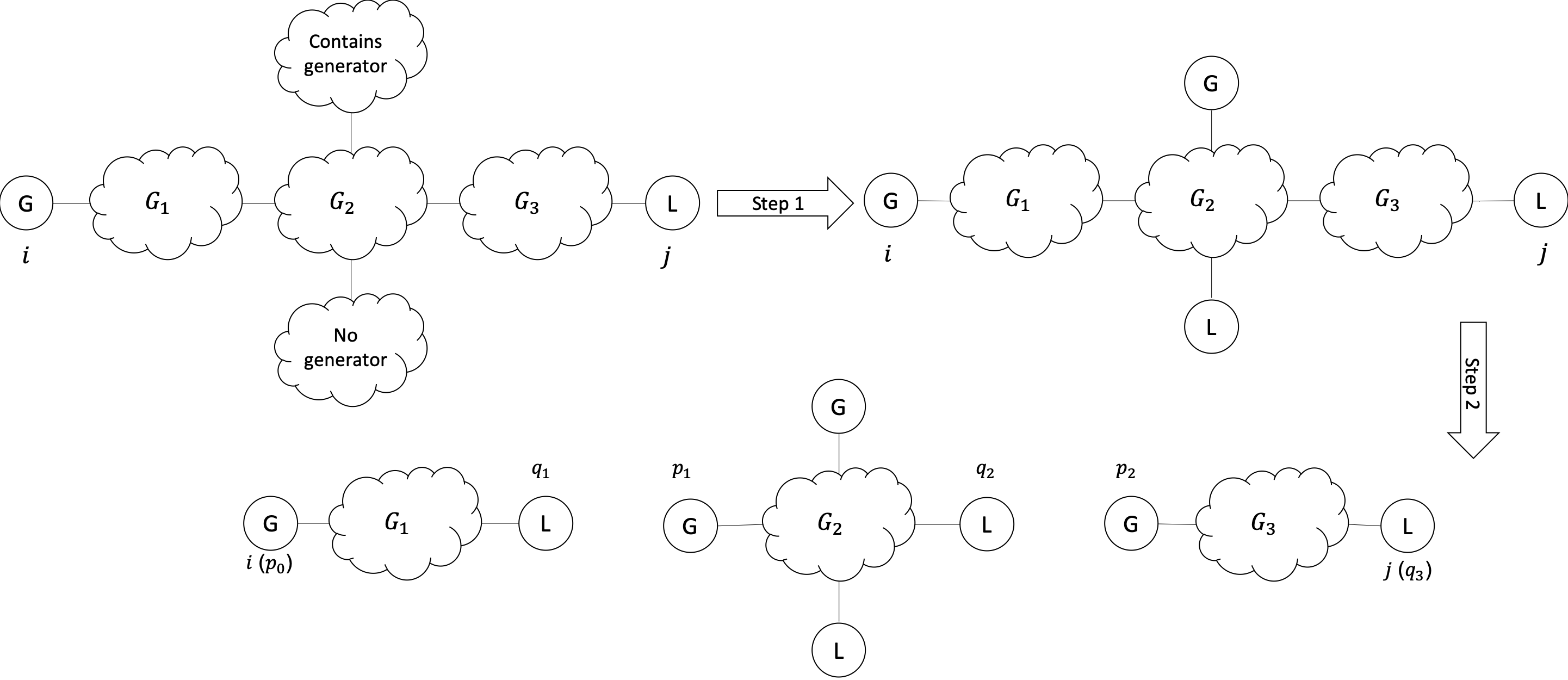}
\caption{Algorithm to decompose the worst-case SISO sensitivity of generator $i$ with respect to load $j$ into computation involving smaller graphs. Step1: Replace the off-path subgraphs by a single node. 
Step 2: Partition the on-path subgraphs and complement each subgraph by adding a pair of generator/load. }
\label{Fig:Alg}
\end{figure*}

\section{Example}\label{sec:example}
We now consider two numerical examples that demonstrate the theory and algorithm presented in the previous section. Both examples make use of the IEEE 9-bus test network, full details of the model can be found in the MATPOWER toolbox~\cite{MATPOWER}.

\subsection{9-Bus Example}
In this example the network is small enough that the decomposition algorithm of Section~\ref{sec:alg} is not necessary. The IEEE 9-bus test network is shown in Figure~\ref{Fig:chained-net}. The network consists of 3 generators, $\{v_1,v_2, v_3 \}$ and 6 loads, $\{v_4,\hdots, v_9\}$. In Table~\ref{tb:wcsen9} we have computed the worst-case SISO sensitivity for every generator load pair in the network.

\begin{figure*}
\centering
\includegraphics[height=11.2em]{./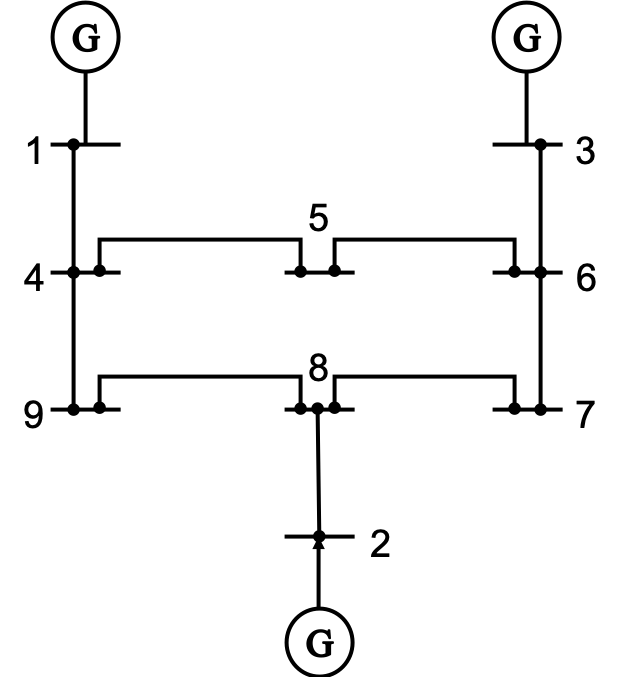}
\hspace{2em}
\includegraphics[height=12em]{./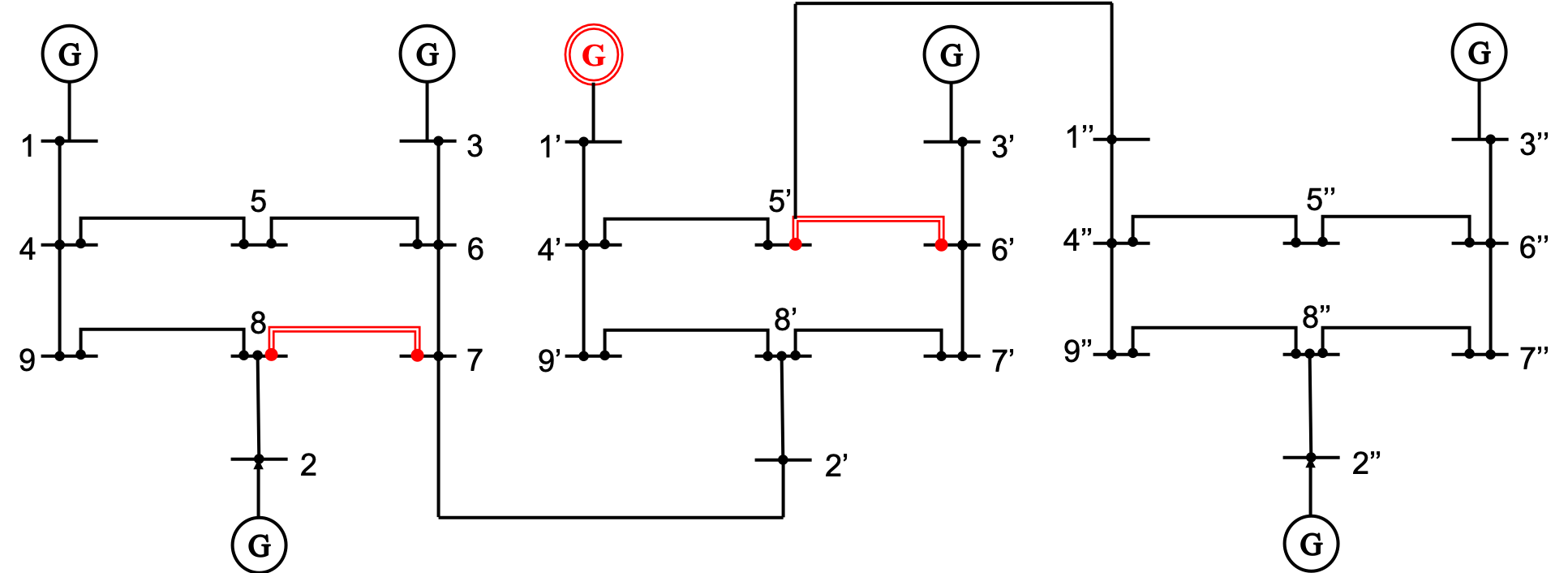}
\caption{Left: IEEE 9-bus network. Right: A 27-bus auxiliary network constructed by chaining three identical 9-bus networks together. Nodes and edges in red indicate that their corresponding generation and flow constraints are binding for every generator-load worst-case SISO sensitivity pairing in Table \ref{tb:binding}.}
\label{Fig:chained-net}
\end{figure*}

\begin{table}[h]
\vspace{0.5em}
\caption{E.g. 1: worst-case SISO sensitivity for the 9-bus network.}
\centering
\begin{tabular}{|c|c|c|c|c|c|c|}
\hline
\diagbox[height=2.5em, width=3em]{$\VertexG$}{$\VertexL$}& $4$ & $5$ & $6$ & $7$ & $8$ & $9$\\ \hline
1  & 1.0000   &  1.3935 &   2.0650 &   2.4748 &   1.9389 &   1.3244\\ \hline
2  &  2.4236  &  2.9560 &   1.7024 &   1.4748 &   1.0000 &   2.0081\\ \hline
3  &  2.5162  &  1.9838 &   1.0000 &   1.3847 &   1.6595 &   3.0081\\ \hline
\end{tabular}
\label{tb:wcsen9}
\end{table}

This example shows that the network is most sensitive to perturbations to load $v_9$ as felt by generator $v_3$. It is interesting to note that the distance (in terms of number of lines between the pair) between this pair of buses is as large as it could be for a network of this topology. The worst-case sensitivities were computed using a brute-force search over the discrete sets $(\setSgen, \setSbra)$ subject to the constraints~\eqref{eq:disc_cons}. This example is small enough for such an approach to easily be computationally tractable. In the next sub-section we consider an example where this is not the case. 

\subsection{27-Bus Example}
This example computes the worst-case SISO sensitivity of a 27-bus network. The network is constructed by chaining together three copies of the 9-bus network described in the previous example, it is illustrated   in Figure~\ref{Fig:chained-net}.  This system was chosen to demonstrate the  algorithm of Section~\ref{sec:alg} as it easily decomposes into three 9-bus subgraphs. The worst-case sensitivity can then be computed (in parallel) for each of the subgraphs, with the  global solution then given by multiplying the sensitivities of each of the subproblems together.

In Table~\ref{tb:wcsen27} we show a subset of the SISO worst-case generator-load pairs. We have chosen to show the results of the computation from loads located at the far right of the network to generators at the far left. From the decomposition algorithm, we know that these values are likely to be larger than those of pairings that are closer together because the graph in the middle, i.e. the 9-bus network with nodes labeled with a single prime, e.g. $4'$, acts as a multiplier for generator-load pairs that have a shortest path passing through it.

\begin{table}[h]
\vspace{0.5em}
\caption{E.g. 2: worst-case SISO sensitivity for the 27-bus chained network.}
\centering
\begin{tabular}{|c|c|c|c|c|c|c|}
\hline
\diagbox[height=2.5em, width=2.78em]{\!\!$\VertexG$}{$\VertexL\!\!$}& $4"$ & $5"$ & $6"$ & $7"$ & $8"$ & $9"$\\ \hline
1 &   7.3155 &   10.1942 &  15.1069 &  18.1045 &   14.1843 &    9.6889\\ \hline
2 &   4.3595 &   6.0750   &  9.0026   & 10.7889  &   8.4528   &    5.7739\\ \hline
3 &   4.0933 &   5.7040   &  8.4528   & 10.1301  &   7.9366   &    5.4213\\ \hline
\end{tabular}
\label{tb:wcsen27}
\end{table}

In Table~\ref{tb:binding} (on the next page), for every sensitivity pairing we have listed the binding constraints, i.e., the edge flows and generations that hit their limits. Observe that generator $1'$ and lines $(7, 8), (5', 6')$ are active for all pairings and hence omitted from the table (they are however marked in red in Figure~\ref{Fig:chained-net}). 

\begin{table*}
\vspace{0.5em}
\caption{Binding generators/branches corresponding to the worst-case SISO sensitivity for the 27-bus chained network.}
\centering
\begin{tabular}{|c|c|c|c|c|c|c|}
\hline
\diagbox[height=2.5em, width=3em]{$\VertexG$}{$\VertexL$}& $4"$ & $5"$ & $6"$ & $7"$ & $8"$ & $9"$\\ \hline
1 &   $3, 2"$, $(6",7")$ &   $3, 3"$, $(4", 5")$ &  $3, 3"$, $(6", 7")$ &  $3, 3"$, $(7", 8")$ &   $3, 2"$, $(6", 7")$ &    $3, 2"$, $(6", 7")$\\ \hline
2 &   $3, 2"$, $(6",7")$ &   $3, 3"$, $(4", 5")$ &  $3, 3"$, $(6", 7")$ &  $3, 3"$, $(7", 8")$ &   $3, 2"$, $(6", 7")$ &    $3, 2"$, $(6", 7")$\\ \hline
3 &   $2, 2"$, $(6",7")$ &   $2, 3"$, $(4", 5")$ &  $2, 3"$, $(6", 7")$ &  $2, 3"$, $(7", 8")$ &   $2, 2"$, $(6", 7")$ &    $2, 2"$, $(6", 7")$\\ \hline
\end{tabular}
\label{tb:binding}
\end{table*}

\section{Conclusion}
The recently developed $\OPF$ operator framework for analyzing the sensitivity of a DC optimal power flow problem was introduced and several sensitivity based analysis questions were posed. Our work highlighted the structure in the associated Jacobian matrix and provided two equivalent (under mild conditions) formulations of the Jacobian; one involving sets of binding constraints,  essentially being a ``discrete'' object composed of a graph and some sets. The second, is continuous in nature, and is constructed from optimal power flow parameters. For the worst-case SISO sensitivity problem, we proposed a decomposition algorithm (that permits parallel computation) to reduce the complexity of the combinatorial nature of the calculation and  illustrated it on a 27-bus example.

We have more structural results akin to Lemma~\ref{lm:subgraph_is_not_full} for specific network topologies that we will be publishing soon. Such results are only possible using the discrete Jacobian formulation.





\bibliographystyle{IEEEtran}
\bibliography{references}
\section{Appendix}
\subsection{OPF in Standard Form}\label{sec:standard}
In this appendix we rewrite the OPF problem~\eqref{eq:opf1} in standard form. i.e., $\{\text{minimize}_{\x}~ \mathbf c^T\x \text{ s.t. }\mathbf{A}\x \le \mathbf b\}$. Define the decision vector as 
\begin{equation*}
\x := \left[ \begin{array}{c} \sgen \\ \hdashline[0.4pt/1pt] \ang \end{array}\right],
\end{equation*}
then the inequality constraints become
\begin{align}\label{eq:standard}
\left[ \begin{array}{rr}
\mathbf 0&\base_1 \\
\mathbf 0&-\base_1 \\
 \hdashline[0.4pt/1pt]
-\mathbf W & \CL \\
\mathbf W & - \CL \\
 \hdashline[0.4pt/1pt]
\CI &\mathbf 0 \\
-\CI  &\mathbf 0 \\
 \hdashline[0.4pt/1pt]
\mathbf 0&  \B\CM^\T\\
\mathbf 0&-\B\CM^\T \\
\end{array}
\right]  \left[ \begin{array}{c} \sgen \\ \ang \end{array}\right] \le 
\left[ \begin{array}{c}
0 \\ 0 \\ \hdashline[0.4pt/1pt]\mathbf y \\ \mathbf y \\  \hdashline[0.4pt/1pt] \genulim \\ \genllim \\ \hdashline[0.4pt/1pt] \overline{\pflow} \\ \underline{\pflow}
\end{array}
\right],
\end{align}
where 
\begin{equation*}
\mathbf W := \left[ \begin{array}{c} \CI \\ \mathbf{0} \end{array}\right] \quad\text{and}\quad \mathbf{y} := 
 \left[ \begin{array}{c}\mathbf{0}  \\ \hdashline[0.4pt/1pt] -\sload \end{array}\right].
\end{equation*}
The partitions in~\eqref{eq:standard} correspond to the constraints in~\eqref{eq:opf1}. The cost vector is defined as 
\begin{equation*}
\mathbf c := \left[ \begin{array}{c} \f \\ \mathbf{0} \end{array}\right] .
\end{equation*}
Note that in this formulation, each equality constraint has been written as two inequality constraints. This has been done so as to coincide with our definition of independent binding constraints using the $\perp$ notation.

\subsection{OPF KKT Conditions}
Here we provide the KKT conditions for the optimal power flow problem~\eqref{eq:opf1}. The Lagrange multiplies are required in the definition of the set $\setf$.

Define $\taueq\in\Real^{\nGL+1}$ to be the vector of Lagrangian multipliers 
associated with equality constraints \eqref{eq:opf1.b}, \eqref{eq:opf1.c},
and $(\lambdap,\lambdam)$ and $(\mup,\mum)$ to be vectors of Lagrangian multipliers 
associated with inequalities \eqref{eq:opf1.d} and \eqref{eq:opf1.e} respectively. 
The  KKT conditions are then:
\begin{eqnarray*}
&& \eqref{eq:opf1.b}-\eqref{eq:opf1.e}\\
&& \bf{0}=M^{\T}\taueq+\CM\B(\mup-\mum)\\
&&  -\f=-[\taueq_1,\taueq_2,\cdots,\taueq_{\nG}]^{\T}+\lambdap-\lambdam\\
&& \mup,\mum,\lambdap,\lambdam\geq 0\\
&& \mup^{\T}(\B\CM^{\T}\ang-\overline{\pflow})=\mum^{\T}(\underline{\pflow}-\B\CM^{\T}\ang)=0\\
&& \lambdap^{\T}(\sgen-\genulim)=\lambdam^{\T}(\genllim-\sgen)=0,
\end{eqnarray*}
where
\begin{align*}
\M:=\left[
\begin{array}{c}
\CL\\
 \base_1^{\T}
\end{array} 
\right]
\end{align*}
is an $(\nGL+1)$-by-$\nGL$ matrix with rank $\nGL$ and $\base_1$ denotes the standard first basis vector. 

\end{document}

\begin{table}
\centering
\begin{tabular}{|l|ccc|}
\hline
\diagbox{Time}{Room}{Day} & Mon & Tue & Wed \\
\hline
Morning   & used & used &      \\
Afternoon &      & used & used \\
\hline
\end{tabular}
\caption{blah}
\end{table}